\newtheorem{thm}{Theorem}[section] 
\newtheorem{lem}[thm]{Lemma}
\newtheorem{prop}[thm]{Proposition}
\newtheorem{rem}[thm]{Remark}
\newtheorem{observ}[thm]{Observation}
\begin{document}

\title[Orchard crossing number of prisms and ladders]{On the Orchard crossing number of prisms, ladders and other related graphs}

\author{Elie Feder and David Garber}

\address{Kingsborough Community College of CUNY, Department of Mathematics and Computer Science,
2001 Oriental Blvd., Brooklyn, NY 11235, USA}
\email{eliefeder@gmail.com, efeder@kbcc.cuny.edu}

\address{Department of Applied Mathematics, Faculty of Sciences, Holon Institute of Technology, Golomb 52,
PO Box 305, Holon 58102, Israel and (Sabbatical:) Einstein Institute of Mathematics, Hebrew University of Jerusalem, Jerusalem, Israel}
\email{garber@hit.ac.il}

\date{\today}

\begin{abstract}
This paper deals with the Orchard crossing number of some families of
graphs which are based on cycles. These include disjoint cycles, cycles which
share a vertex and cycles which share an edge.
Specifically, we focus on the prism and ladder graphs.
\end{abstract}

\maketitle

\section{Introduction}

Let $G=(V,E)$ be a graph. A {\it rectilinear drawing} $R(G)$ is an injective mapping of the vertices of $G$ to a set of points in the plane in general position (i.e. no three points are collinear), and a mapping of the edges of $G$ to straight line segments between the corresponding points.
An intersection of an edge of $R(G)$ with a straight line generated by a different pair of points of $R(G)$ is interpreted as an {\it Orchard crossing} (following the Orchard relation introduced in \cite{bacher,BaGa}). If $c(s,t)$ counts these Orchard crossings for an edge $(s,t)\in E$, then the number $c(R(G))$ of Orchard crossings of $R(G)$ is:
$$c(R(G)) = \sum _{(s,t) \in E} c(s,t);$$
note that the sum is taken only over the edges of the graph, whence $c(s,t)$
counts {\it all} the lines generated by pairs of points.

The {\it Orchard crossing number} of $G$, ${\rm OCN}(G)$, is the minimum over all rectilinear drawings $R(G)$ of $G$:
$${\rm OCN}(G) = \min _{R(G)} \{c(R(G))\}.$$

Note that the {\it maximum Orchard crossing number} of $G$ (denoted by ${\rm MOCN}(G)$ and defined as the maximal number of crossings over all drawings of $G$) is also interesting due to Proposition 2.7 in \cite{FG1} which states that ${\rm MOCN}(K_n)$ and the {\it rectilinear crossing number} $\overline{\rm cr}(K_n)$ (see \cite{aak2,F,PT}) of the complete graph $K_n$ are attained by the same realization $R(K_n)$. In general, the determination of ${\rm MOCN}(G)$ might be easier than the determination of $\overline{\rm cr}(G)$. Furthermore, the concept of the Orchard crossing number can be considered in higher dimensions too (see \cite{bacher}). In \cite{FG1,FG2}, we have computed the Orchard crossing number for some families of graphs.

\medskip

In this paper, we deal with the Orchard crossing number of
some families of graphs which are based on cycles. These include
disjoint cycles,
cycles which share a vertex and cycles which share an edge.
Note that in the context of the rectilinear crossing number,
all of these families have no crossings, since all of these graphs
are planar. Therefore,
the Orchard crossing number succeeds to differentiate between these graphs.

\medskip

Mainly, we have the following results:

\begin{thm}   \ \\
\begin{enumerate}
\item Let $G$ be the graph consisting of a disjoint union of $x$ cycles of order $n$. Then:
$${\rm OCN}(G)=n(n-2)x(x-1).$$
\item Let ${\mathcal G}$ be the family of graphs which share the property that they consist of a closed chain of $x$ cycles of order $n$, each attached to the adjacent cycle by a vertex. Then:
$$\min_{G \in \mathcal G} \{{\rm OCN}(G)\}=x(n-2)(xn-x-n).$$
\item Let ${\mathcal G}$ be the family of graphs which share the property that they consist of an open chain of $x$ cycles of order $n$, each attached to the adjacent cycle by a vertex. Then:
$$\min_{G \in \mathcal G} \{{\rm OCN}(G)\}=x(x-1)(n-1)(n-2).$$
\item Let $G$ be the graph consisting of $x$ cycles of order $3$ all attached at a common vertex. Then:
$${\rm OCN}(G)=x(x-1)^2.$$
\item Let $P_n$ be the $n$-prism graph (see its definition in Section \ref{prism}). Then for even $n>4$, we have:
$$\max\{3n(n-2),4n(n-3)\} \leq {\rm OCN}(P_n) \leq 4n(n-2).$$
For odd $n>4$, we have:
$$\max\{3n(n-2),4n(n-3)\} \leq {\rm OCN}(P_n) \leq 4n(n-2)+2.$$
\item  Let $L_n$ be the $n$-ladder graph (see its definition in Section \ref{ladder}). Then for odd $n>5$, we have:
$$\max\{3n^2-10n +7,4(n-2)(n-3)\} \leq {\rm OCN}(L_n) \leq 4(n-1)(n-2).$$
For even $n>5$, we have:
$$\max\{3n^2-10n +8,4(n-2)(n-3)\} \leq {\rm OCN}(L_n) \leq 4(n-1)(n-2).$$
\end{enumerate}
\end{thm}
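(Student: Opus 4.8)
The plan is to treat the ladder as the prism with its two ``closing'' rail edges removed, proving the upper bound by a single explicit drawing and the two lower bounds by two independent counting arguments, exactly paralleling the scheme used for $P_n$ in part (5). Throughout I write the vertices as an outer chain $u_1,\dots,u_n$ and an inner chain $v_1,\dots,v_n$, so that $L_n=P_n\setminus\{u_1u_n,\,v_1v_n\}$.

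For the upper bound I would start from the drawing of $P_n$ realizing the prism upper bound of part (5): place the $2n$ vertices as two nearly coincident convex $n$-chains, with each rung $u_iv_i$ a very short, essentially radial segment. Since $L_n$ and $P_n$ have the same vertex set, deleting the two edges $u_1u_n$ and $v_1v_n$ does not change the set of lines generated by the points, so the Orchard count drops by exactly $c(u_1u_n)+c(v_1v_n)$, the number of lines separating the endpoints of the two deleted rails in that drawing. In this drawing the rungs separate almost no pairs, while each rail edge is separated mainly by the ``mixed'' lines through one inner and one outer vertex; a direct count shows each deleted rail is crossed by $2(n-2)$ lines when $n$ is even and by $2(n-2)+1$ lines when $n$ is odd, so the total drops from $4n(n-2)$ (resp.\ $4n(n-2)+2$) to $4(n-1)(n-2)$ in both parities, giving ${\rm OCN}(L_n)\le 4(n-1)(n-2)$.

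For the lower bounds I would fix an arbitrary rectilinear drawing $R(L_n)$ and bound $c(R(L_n))=\sum_{(s,t)\in E}c(s,t)$ from below in two ways. The local tool is the separation/wedge count: a line through a pair $\{p,q\}$ of vertices other than $s,t$ crosses the open segment $st$ exactly when $s$ and $t$ lie on opposite sides of the line $pq$; writing $D_r$ for the set of vertices $x$ with line $rx$ crossing $st$, each such line is seen from both of its vertices, so $c(s,t)=\tfrac12\sum_{r\neq s,t}|D_r|$. Summing this over the $2(n-1)$ rail edges and the $n$ rungs, and using how the $2n$ points split across the line of each edge, should yield the first bound $4(n-2)(n-3)$; here I would argue, as for the prism, that once the open chain of $n-1$ squares is placed in general position the two rails force a quadratic number of unavoidable separations among the mixed pairs. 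The second bound, $3n^2-10n+7$ for odd $n$ and $3n^2-10n+8=(n-2)(3n-4)$ for even $n$, I would obtain from the complementary argument behind the prism bound $3n(n-2)=3n^2-6n$, subtracting the contribution of the two missing edges (about $4(n-2)$) and incorporating a halving-type count whose floor changes by one with the parity of $n$; this parity is exactly the source of the $+7$ versus $+8$ discrepancy.

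The main obstacle will be the lower bounds rather than the construction. The upper bound is a bounded, if fiddly, computation in one fixed drawing, but the lower bounds must hold over all drawings, and the crux is to show that no placement of the $2n$ points can simultaneously make every rail and rung edge ``wedge-cheap.'' Turning the local wedge identity into the two global quadratic bounds, tightly enough to match the stated polynomials and with the correct parity-dependent constants, is the delicate step, and I expect it to require the same convex-hull/layering case analysis that drives the prism lower bounds in part (5), now adapted to the open, non-closed chain.
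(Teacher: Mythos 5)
Your overall skeleton (upper bound by deleting the two closing rails from a prism drawing, lower bounds by counting arguments paralleling the prism) matches the paper's in spirit, but both halves have genuine gaps. The upper bound fails because the drawing you describe is not the one that realizes the prism bound in part (5). The optimal prism drawing places all $2n$ points \emph{in convex position} on a regular $2n$-gon, colored in the pattern white, white, black, black, \dots, so that every rung and half of the rail edges lie on the convex hull (hence carry no crossings), while the remaining rail edges are chords cutting off exactly two points, each with $2(2n-4)$ crossings. Your drawing --- two nearly coincident concentric convex $n$-gons with short radial rungs --- is a different and strictly worse drawing: there every line $v_jv_k$ crosses two outer rail edges, every mixed line $u_jv_k$ crosses one outer and one inner rail edge, and the total comes to $4n(n-1)$, not $4n(n-2)$; deleting the two closing rails then still leaves roughly $4n(n-2)$, which exceeds the claimed $4(n-1)(n-2)$. (Even in the correct drawing your per-edge accounting is off for even $n$: the two deleted rails, which must lie in a common $4$-cycle, have $0$ and $4(n-2)$ crossings respectively, not $2(n-2)$ each; only the total drop $4(n-2)$ is right.)

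The lower bounds are the more serious gap, and your sketches do not contain the needed idea. The bound $4(n-2)(n-3)$ comes in the paper from the $n-1$ four-cycles: each has $2n-4$ points off it contributing at least $2$ crossings apiece by Observation \ref{obs}, and one subtracts at most $4$ possibly overcounted crossings for each of the $n-2$ shared edges; your plan of summing wedge counts over rails and rungs never produces this, since Observation \ref{obs} applies to cycles, not to individual edges. For the bound $3n^2-10n+7$ (odd) / $3n^2-10n+8$ (even), your plan --- start from the prism bound $3n(n-2)$ and ``subtract the contribution of the two missing edges'' --- is not a valid argument: a lower bound must hold for \emph{every} drawing of $L_n$, and in an arbitrary drawing the two absent prism edges could carry far more than $4(n-2)$ crossings, so nothing bounds the loss; worse, the prism argument itself leans on the two rail $n$-cycles, which are not cycles of $L_n$ at all. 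What the paper does instead is exhibit a family of cycles of $L_n$ covering every edge exactly twice --- two ``boundary'' cycles, one through the top rung and one through the bottom rung, together with $n-2$ of the $n-1$ four-cycles --- and then apply Observation \ref{obs} to each cycle and halve. The parity difference has nothing to do with a floor in a halving count: $L_n$ is bipartite, so all its cycles have even order; for even $n$ the two boundary cycles can both have order $n+2$, while for odd $n$ they must have orders $n+1$ and $n+3$, and $(n-1)^2+(n+1)(n-3)=2n(n-2)-2$ is exactly what produces $+7$ instead of $+8$.
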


\medskip

The paper is organized as follows. In section \ref{sec_obs},
we state some important observations. In Section \ref{disjoint_sec},
we deal with the graph consisting of a disjoint union of $x$ cycles of
order $n$. In Section \ref{chain_sec}, we deal with the family of graphs consisting of
a chain of $x$ cycles of order $n$ with adjacent cycles attached
to each other at one vertex.
In Section \ref{common_sec}, we deal with a graph consisting of
$x$ cycles of order $3$ all attached at a common vertex.
Section \ref{prism} deals with the $n$-prism graph and Section \ref{ladder}
deals with the $n$-ladder graph.

\section{Easy observations}\label{sec_obs}

The following observations are very easy, but will be used numerous times throughout the paper (see examples in Figure \ref{pt_and_cycle}).
\begin{observ}\label{obs}
\begin{enumerate} \ \\
\item A point inside a cycle of order $n$ contributes at least $n$ Orchard crossings, and this minimum is attained in the case that the cycle is in convex position.
\item A point outside a cycle of order $n$ contributes at least $n-2$ Orchard crossings, and this minimum is attained in the case that the cycle is in convex position.
\end{enumerate}
\end{observ}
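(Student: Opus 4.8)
The plan is to read the ``contribution'' of the point $p$ as $\sum_{k=1}^{n} c_k$, where $c_k$ denotes the number of cycle edges met in their interior by the line $\ell_k$ through $p$ and the cycle vertex $v_k$. These are the only lines through $p$ that can generate Orchard crossings with the edges of the cycle, since the cycle contributes no other points. I would first dispose of the two equality (``minimum attained'') claims by a direct count in convex position. If the cycle is convex and $p$ lies inside it, each line $\ell_k$ passes through the interior point $p$ and the boundary vertex $v_k$, so the ray opposite to $v_k$ leaves the convex region through exactly one edge; thus $c_k=1$ and the total is exactly $n$. If the cycle is convex and $p$ lies outside it, the two supporting lines from $p$ touch the polygon at two vertices whose lines meet no edge interior (so $c_k=0$ there), while for each of the remaining $n-2$ vertices the line $\ell_k$ crosses exactly one edge, yielding exactly $n-2$.

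For the lower bound in part (1), I would use the opposite-ray argument in general (non-convex) position. Since $p$ is interior to the simple closed polygon bounded by the cycle, for every vertex $v_k$ the ray emanating from $p$ in the direction \emph{opposite} to $v_k$ must exit the region, hence cross the boundary. By the general-position hypothesis no three of the points are collinear, so this exit point cannot be a vertex (that would force $p$, $v_k$ and another vertex to be collinear); it therefore lies in the interior of some edge. Consequently $c_k\ge 1$ for every $k$, and $\sum_k c_k \ge n$.

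The main work, and the step I expect to be the real obstacle, is the lower bound $n-2$ in part (2). My plan is a counting argument via angular order. Order the $n$ vertices by the angle they subtend at $p$; when $p$ lies outside the convex hull of the vertices (which is exactly the situation in the convex drawing where the bound is claimed to be attained), these angles all fall in a wedge of opening less than $\pi$, and then $\ell_k$ meets an edge $e=uw$ in its interior precisely when the angular rank of $v_k$ is strictly between those of $u$ and $w$. Writing ${\rm rk}(\cdot)$ for the angular rank, this gives $c(e)=|{\rm rk}(u)-{\rm rk}(w)|-1$, and double counting yields $\sum_k c_k=\sum_{e}c(e)=\bigl(\sum_{e}|{\rm rk}(u)-{\rm rk}(w)|\bigr)-n$. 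It then remains to minimize the cyclic sum $\sum_{e}|{\rm rk}(u)-{\rm rk}(w)|$ over all ways of placing the ranks $1,\dots,n$ around the cycle. For this I would observe that each of the $n-1$ unit gaps between consecutive rank values is straddled by an even, hence at least two, of the cycle edges, so this sum is at least $2(n-1)$; substituting gives $\sum_k c_k \ge 2(n-1)-n=n-2$, with equality exactly when the angular order agrees with the cyclic order, i.e.\ in convex position.

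The delicate point to address carefully is the hypothesis ``$p$ outside the cycle'' when $p$ sits inside the convex hull of the vertices (in a reflex pocket of a non-convex drawing), where the angular directions wrap fully around $p$ and the clean ``strictly between'' characterization of $c(e)$ fails. I would handle this either by checking that the straddling/parity count $\sum_k c_k\ge n-2$ persists under the wrap-around analysis, or by noting that in all the configurations used later in the paper the relevant cycles are drawn convexly, so the wedge case already supplies the bound needed. The convex computations above guarantee the bounds are tight, completing the Observation.
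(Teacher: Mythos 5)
Your convex-position computations, your part (1) lower bound, and your wedge argument for part (2) when $p$ lies outside the convex hull of the cycle's vertices are all correct (and, for what it is worth, go well beyond the paper, which states this Observation as ``very easy'' and offers no proof at all, only the illustrating figure). The genuine gap is precisely the case you flag in your last paragraph and then do not close: $p$ outside the cycle but \emph{inside} the convex hull of its vertices. Your fallback (b) --- that the cycles appearing later in the paper are drawn convexly --- is not admissible: the Observation is invoked in every lower-bound argument of the paper (``any point separated from a cycle contributes at least $n-2$ Orchard crossings''), and those arguments quantify over \emph{all} rectilinear drawings, since ${\rm OCN}$ is a minimum; in the unknown optimal drawing the cycles need not be convex and a point may well sit in a reflex pocket. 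Your fallback (a) is only a hope, and in fact your key identity $c(e)=|{\rm rk}(u)-{\rm rk}(w)|-1$ genuinely fails once the directions to the vertices wrap around $p$ by more than $\pi$, so the straddling count does not ``persist'' verbatim.

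Fortunately the missing case is easy --- easier than the wedge argument. If $p$ lies in the interior of the convex hull of the vertices, then for every $k$ the line $\ell_k$ through $p$ and $v_k$ has cycle vertices strictly on both of its sides: by general position no vertex other than $v_k$ lies on $\ell_k$, and if all other vertices lay strictly on one side, the hull would meet $\ell_k$ only at $v_k$, contradicting $p$ being an interior hull point on $\ell_k$. The cycle is a closed connected curve visiting both open half-planes of $\ell_k$, so it crosses $\ell_k$ at least twice; at most one crossing can occur at the vertex $v_k$, hence at least one occurs in the interior of an edge. Thus $c_k\ge 1$ for every $k$, giving a total of at least $n\ge n-2$ in this case. (This dichotomy in fact unifies your whole part (2): $c_k=0$ forces $\ell_k$ to be a supporting line of the vertex set through $p$, and a point outside the hull admits exactly two such tangent lines while a point inside the hull admits none.) With this case supplied, your proof is complete and correct.
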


\begin{figure}[h]
\epsfysize=4cm
\centerline{\epsfbox{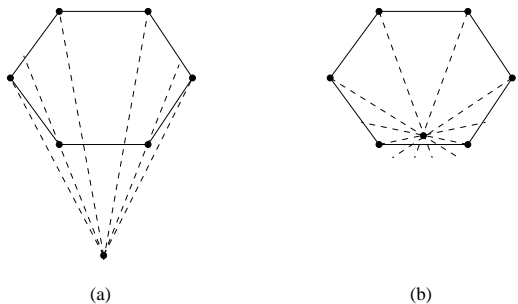}}
\caption{Part (a) illustrates $n-2$ Orchard crossings on a cycle of order $n$ determined by a point outside the cycle, while part (b) illustrates $n$ Orchard crossings on a cycle determined by a point inside an $n$-cycle.} \label{pt_and_cycle}
\end{figure}

\section{The Orchard crossing number of a disjoint union of $x$ cycles}\label{disjoint_sec}

In this section, we compute the Orchard crossing number of the graph consisting of a disjoint union of $x$ cycles of order $n$.

\begin{prop}
Let $G$ be the graph consisting of a disjoint union of $x$ cycles of order $n$. Then:
$${\rm OCN}(G)=n(n-2)x(x-1),$$
which is attained on the realization placing all the $nx$ points in convex position with the points representing each cycle adjacent to each other around the convex hull (see Figure \ref{union_cn} for an example for $n=4,x=3$).

\begin{figure}[h]
\epsfysize=4cm
\centerline{\epsfbox{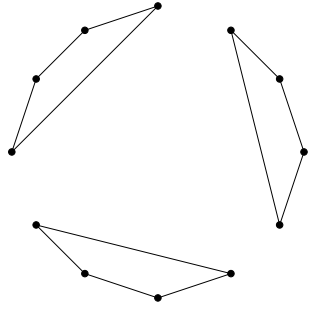}}
\caption{The best realization of a graph consisting of $3$ disjoint cycles of order $4$} \label{union_cn}
\end{figure}

\end{prop}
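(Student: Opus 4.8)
The plan is to establish matching upper and lower bounds, both equal to $n(n-2)x(x-1)$. The organizing principle in both directions is to attribute every Orchard crossing to the unique cycle whose edge is being crossed, and then to apply Observation \ref{obs}, which controls how many crossings the $n$ lines from a single point to the vertices of a cycle produce on that cycle's edges.

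For the upper bound I would work with the convex realization described in the statement, in which the $n$ points of each cycle occupy a contiguous arc of the common convex hull. Fix a cycle $C_i$ and count the crossings on its edges according to the endpoints of the generating line. A line through two vertices of $C_i$ is a chord of the hull and meets the boundary only at its two endpoints, so it crosses no edge of $C_i$. A line through two points external to $C_i$ also produces no crossing: because the vertices of $C_i$ form a contiguous arc, such a chord leaves all of them on one side of its line, and hence every edge of $C_i$, having both endpoints on that side, avoids the line entirely. Therefore all crossings on $C_i$ come from lines with exactly one endpoint in $C_i$, and by part (2) of Observation \ref{obs} each of the $n(x-1)$ external points contributes exactly $n-2$ of them, the convex minimum. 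This gives $n(x-1)(n-2)$ crossings per cycle and $x \cdot n(x-1)(n-2) = n(n-2)x(x-1)$ in all.

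For the lower bound I would take an arbitrary rectilinear drawing and again sort the crossings by the cycle whose edge they meet. Fix distinct cycles $C_i$ and $C_j$. Each of the $n$ vertices of $C_j$ is, relative to $C_i$, an inside or an outside point, so by Observation \ref{obs} it produces at least $n-2$ crossings on the edges of $C_i$ (at least $n$ in the inside case, which only helps); the $n$ vertices of $C_j$ thus force at least $n(n-2)$ crossings there. Summing over the $x-1$ cycles different from $C_i$ bounds the crossings on $C_i$ below by $n(x-1)(n-2)$, and summing over the $x$ choices of $C_i$ gives $c(R(G)) \geq x(x-1)n(n-2)$, matching the upper bound.

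The step I expect to require the most care is the bookkeeping in the lower bound: one must check that the crossings summed there are genuinely distinct, so that the inequality bounds $c(R(G))$ from below rather than overcounting. Each such crossing is produced by a line through one vertex $v$ of $C_i$ and one point $P$ external to $C_i$, and the general-position hypothesis forces that line to determine the pair $\{P,v\}$ uniquely; hence crossings arising from different external points, and a fortiori from different cycles $C_i$ (whose edge sets are disjoint), lie on different lines and are never identified. The analogous delicate point in the upper bound is the convexity claim that a chord between two points external to $C_i$ crosses no edge of $C_i$, which rests on the contiguity of each cycle's arc; once both of these are in place, the two bounds drop out of Observation \ref{obs} and agree.
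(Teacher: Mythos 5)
Your proposal is correct and follows essentially the same route as the paper: an exact count of the stated convex realization for the upper bound (the paper tallies per edge---hull edges have no crossings, each cycle's internal chord has $(n-2)\cdot n(x-1)$---while you tally per cycle by classifying the generating pairs), combined with the identical lower bound obtained by summing Observation \ref{obs} over all point--cycle pairs and noting distinctness. One small point to patch in your upper bound: your reason that pairs inside $C_i$ contribute nothing (``the line meets the hull boundary only at its endpoints'') covers only the $n-1$ hull edges of $C_i$, not the one non-hull edge closing the cycle; for that chord, observe that its endpoints are the two extremes of $C_i$'s contiguous arc, so they are never separated by a line through two other vertices of $C_i$ (and a line through one of them meets that edge only at an endpoint), whence it too receives no crossings from such pairs.
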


\begin{proof}
It is easy to see that the above-mentioned realization has indeed $n(n-2)x(x-1)$ Orchard crossings, since each of the $x$ internal chords has $n(n-2)(x-1)$ Orchard crossings (following the fact that there are $n-2$ points on one of its sides and $n(x-1)$ points on its other side). Hence: ${\rm OCN} (G) \leq n(n-2)x(x-1)$.

On the other hand, based on Observation \ref{obs}, any point separated from a cycle contributes at least $n-2$ Orchard crossings (since any point inside the cycle contributes at least $n$ crossings, and any point outside the cycle contributes at least $n-2$ crossings). For each of the $x$ cycles, we have $n(x-1)$ such points, so we have at least
$n(n-2)x(x-1)$ {\it distinct} Orchard crossings (they are distinct, because the edges of the cycles are disjoint, and the Orchard crossings are located on the edges of the cycles). Therefore, ${\rm OCN} (G) \geq n(n-2)x(x-1)$, and we are done.
\end{proof}

\begin{rem}
The same proof will work also for a disjoint union of cycles of different orders. The formula for the Orchard crossing number will be changed accordingly.
\end{rem}

\section{The Orchard crossing number of a chain of $x$ cycles}\label{chain_sec}

In this section, we compute the minimal value of the Orchard crossing number over the family of graphs consisting of closed and open chains of $x$ cycles of order $n$ with adjacent cycles attached to each other at one vertex (see Figures \ref{closed_chain_fig} and \ref{open_chain_fig} for examples for the minimal values for a closed chain and an open chain, respectively). The reason that this is indeed a {\it family} of graphs is due to the fact that the varied distance between the two points of each cycle which are shared with adjacent cycles determines different graphs.

\begin{prop}\label{closed_chain}
Let ${\mathcal G}$ be the family of graphs which share the property that they consist of a closed chain of $x$ cycles of order $n$, each attached to the adjacent cycle by a vertex. Then:
$$\min_{G \in \mathcal G} \{{\rm OCN}(G)\}=x(n-2)(xn-x-n).$$
This minimum is attained by the graph in which the two points of each cycle which are shared with another cycle are adjacent to each other. The realization of this graph which yields this minimum places all the $(n-1)x$ points in convex position with the points representing each cycle adjacent to each other around the convex hull (see Figure \ref{closed_chain_fig} for an example for $n=4,x=3$).

\begin{figure}[h]
\epsfysize=4cm
\centerline{\epsfbox{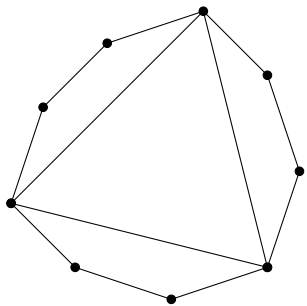}}
\caption{The best realization of a graph consisting of a closed chain of $3$ cycles of order $4$} \label{closed_chain_fig}
\end{figure}

\end{prop}

\begin{proof}
It is easy to see that the above-mentioned realization has indeed $x(n-2)(xn-x-n)$ Orchard crossings, since each of the $x$ internal chord has $(n-2)((x-1)(n-1)-1)$ Orchard crossings (following the fact that there are $n-2$ points on one of its sides and $(x-1)(n-1)-1$ points on its other side). Hence:
$$\min_{G \in \mathcal G} \{ {\rm OCN} \} (G) \leq x(n-2)(xn-x-n).$$

On the other hand, based on Observation \ref{obs}, given any graph $G \in \mathcal G$, any point separated from a cycle contributes at least $n-2$ Orchard crossings. For each of the $x$ cycles, we have $(n-1)(x-1)-1$ such points, so we have at least
$x(n-2)((n-1)(x-1)-1)$ distinct Orchard crossings (since again the {\it edges} of the cycles are disjoint).
Therefore:
$$\min_{G \in \mathcal G} \{ {\rm OCN} (G) \} \geq x(n-2)(xn-x-n),$$
and we have equality.
\end{proof}

\medskip

The situation for an open chain is almost the same, so we only formulate the result:
\begin{prop}\label{open_chain}
Let ${\mathcal G}$ be the family of graphs which share the property that they consist of an open chain of $x$ cycles of order $n$, each attached to the adjacent cycle by a vertex. Then:
$$\min_{G \in \mathcal G} \{ {\rm OCN}(G)\}=x(x-1)(n-1)(n-2).$$
This minimum is attained by the graph in which the two points of each cycle which are shared with another cycle are adjacent to each other. The realization of this graph which yields this minimum places all the $(n-1)x+1$ points in convex position with the points representing each cycle adjacent to each other around the convex hull (see Figure \ref{open_chain_fig} for an example for $n=4,x=3$).

\begin{figure}[h]
\epsfysize=4cm
\centerline{\epsfbox{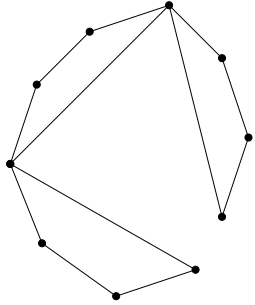}}
\caption{The best realization of a graph consisting of an open chain of $3$ cycles of order $4$} \label{open_chain_fig}
\end{figure}

\end{prop}


\section{The Orchard crossing number of $x$ cycles sharing a common vertex}\label{common_sec}

In this section, we compute the Orchard crossing number for some subcases of the family of graphs consisting of $x$ cycles sharing a common vertex.

\subsection{The case of $x$ cycles of order $3$}

Here, we treat the case of a graph consisting of $x$ cycles of order $3$, all attached at a common vertex:

\begin{prop}
Let $G$ be the graph consisting of $x$ cycles of order $3$ all attached at a common vertex. Then:
$${\rm OCN}(G)={\rm OCN}(K_{2x,1})=x(x-1)^2,$$
where $K_{2x,1}$ is the complete bipartite graph on two sets, one of $2x$ vertices and one of $1$ vertex.

This number is attained by the realization placing the common point in the center of a circle, the other $2n$ points on that circle, and the non-common points of each $3$-cycle adjacent to each other. Moreover, there is exactly one cycle of order $3$ which lies in the area bounded by the two lines generated by the central point and the two other points of each cycle  (see Figure \ref{x_c3_point} for an example for $x=4$).

\begin{figure}[h]
\epsfysize=4cm
\centerline{\epsfbox{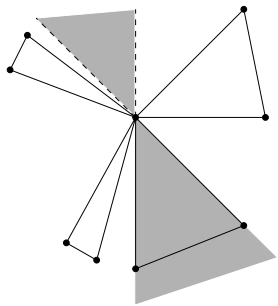}}
\caption{The best realization of a graph consisting of $x$ cycles of order $3$, all attached at a common vertex. In the gray area, there can be only one cycle (in order to have no Orchard crossings on the edges located on the convex hull).} \label{x_c3_point}
\end{figure}

\end{prop}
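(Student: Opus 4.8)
The plan is to trap $\mathrm{OCN}(G)$ between two matching bounds, using the star $K_{2x,1}$ — the subgraph of $G$ consisting of the $2x$ spokes from the common vertex $O$ to the $2x$ non-common vertices — as the engine for the lower bound, and the realization of Figure \ref{x_c3_point} for the upper bound. The logical skeleton is
$$x(x-1)^2=\mathrm{OCN}(K_{2x,1})\le \mathrm{OCN}(G)\le x(x-1)^2,$$
so that the only genuine inputs are the value of $\mathrm{OCN}(K_{2x,1})$ and the count for the displayed drawing.

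First I would record a general monotonicity principle: if $H$ is a subgraph of $G$ on the same vertex set, then for every realization $R$ the underlying point set is identical, so $c(R(H))=\sum_{e\in E(H)}c(e)\le\sum_{e\in E(G)}c(e)=c(R(G))$; evaluating at an optimal realization of $G$ gives $\mathrm{OCN}(H)\le\mathrm{OCN}(G)$. Taking $H=K_{2x,1}$ yields $\mathrm{OCN}(K_{2x,1})\le\mathrm{OCN}(G)$, which is exactly the lower bound I want, \emph{provided} I know that $\mathrm{OCN}(K_{2x,1})=x(x-1)^2$. It is worth noting that a direct appeal to Observation \ref{obs} does not suffice: applying it to each triangle $T_i$ shows only that each of the $2(x-1)$ vertices outside $T_i$ forces at least $n-2=1$ crossing on the pairwise disjoint edges of $T_i$, giving the weaker estimate $\mathrm{OCN}(G)\ge 2x(x-1)$. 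This is why the star, rather than the cycles, must drive the lower bound.

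For the star value I would write $\sum_j c(Ov_j)=\sum_{\{p,q\}}s(p,q)$, summed over pairs of leaves, where $s(p,q)$ is the number of leaves strictly on the opposite side of the line $v_pv_q$ from $O$, and argue this sum is at least $x(x-1)^2$; the matching realization places $O$ at the centre of a circle carrying the $2x$ leaves (perturbed to avoid the collinearities through the centre). In that drawing each of the $x$ near-diameters contributes $x-1$ crossings, since it separates $O$ from the $x-1$ leaves on its far side, while each spoke receives a further $\binom{x-1}{2}$ crossings from the short minor-arc chords, for a total $x(x-1)(x-2)+x(x-1)=x(x-1)^2$. The upper half of this is an explicit count; the matching lower half is cleanest to cite from \cite{FG1,FG2} or to isolate as a separate lemma. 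For the upper bound on $G$ I would then analyse Figure \ref{x_c3_point} directly: since each base $a_ib_i$ is an edge of the convex hull, a line through two hull vertices cannot cross it, so the only possible crossings on $a_ib_i$ come from spokes $Ov_k$ whose antipodal ray leaves the hull through $a_ib_i$ — that is, from points $v_k$ in the antipodal (gray) wedge of $a_ib_i$. The ``at most one cycle per wedge'' hypothesis is precisely what keeps these bases free of Orchard crossings, so the total reduces to the spoke crossings, which reproduce the centre-of-circle star count $x(x-1)^2$.

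The main obstacle is the star lower bound $\mathrm{OCN}(K_{2x,1})\ge x(x-1)^2$: because $K_{2x,1}$ is acyclic, Observation \ref{obs} gives no leverage, and one needs a genuine extremal argument (on the pair-sum above) showing the centre-of-circle placement cannot be beaten. A secondary but delicate point is the bookkeeping in the construction, namely the interaction of the near-diameters with the base edges and the parity of $x$ — for even $x$ the wedge antipodal to a base points at another cluster of outer points — which is what forces the careful ``exactly one cycle in the gray wedge'' condition and must be checked to guarantee the bases really contribute nothing.
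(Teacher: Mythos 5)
Your proposal is correct and follows essentially the same route as the paper: the lower bound comes from the spanning star subgraph $K_{2x,1}$ on the same vertex set (so ${\rm OCN}(K_{2x,1})\le{\rm OCN}(G)$), the upper bound from the circle realization in which the base edges on the convex hull carry no crossings, and the value $x(x-1)^2$ is exactly what the paper imports from \cite[Proposition 4.1]{FG1} via ${\rm OCN}(K_{n,1})=n(n-2)^2/8$ with $n=2x$. The only difference is presentational: where you count the star crossings in the centre-of-circle drawing explicitly and flag the delicate wedge/parity bookkeeping needed to keep the bases crossing-free, the paper simply cites the known optimal star realization and asserts that the prescribed placement adds no further crossings.
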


\begin{proof}
Note that the graph $K_{2x,1}$ is a subgraph of $G$ on the {\it same set} of vertices. Therefore:
$${\rm OCN}(K_{2x,1}) \leq {\rm OCN}(G).$$
On the other hand, the realization of $G$ presented in the formulation of the proposition has no additional Orchard crossings than those of $K_{2x,1}$ (this is due to the additional requirement on the manner of the placement of the points on the circle). Therefore, we have:
$${\rm OCN}(K_{2x,1}) \geq {\rm OCN}(G).$$
Thus, we have an equality.

Now, substituting $2x$ for $n$ in the Orchard crossing number for the graph $K_{n,1}$,
$${\rm OCN}(K_{n,1})=\frac{n(n-2)^2}{8} \qquad \hbox{(see \cite[Proposition 4.1]{FG1}),}$$
yields the claimed result.
\end{proof}

\begin{rem}
The argument of the proof {\em will not work} in the case of a graph $G$ consisting of $x$ cycles of order $n$ attached at a common vertex for $n>3$. This is because for $n>3$, the graph $K_{(n-1)x,1}$ is no longer a subgraph of $G$.
\end{rem}

\subsection{The cases of two and three cycles of order $n$}

The case of a graph $G$ consisting of two cycles of order $n$ which share a common vertex is actually a subcase of an open chain of length $2$. Thus, substituting $2$ for $x$ in Proposition \ref{open_chain} yields that:
$${\rm OCN}(G)=2(n-1)(n-2).$$

The case of a graph $G$ consisting of three cycles of order $n$ which share a common vertex is treated in the following proposition. Its proof is similar to the proof of Proposition \ref{closed_chain}.

\begin{prop}
Let $G$ be the graph consisting of three cycles of order $n$, all attached at a common vertex. Then:
$${\rm OCN}(G)=6(n-1)(n-2).$$
This number is attained by the realization placing the common point in the center of a circle, the other $3(n-1)$ points on that circle, and the non-common points of each $n$-cycle adjacent to each other. Moreover (as in the previous proposition), there is exactly one cycle of order $n$ which lies in the area bounded by two lines generated by the central point and the two points adjacent to the center in the same cycle (see Figure \ref{3_c4_point} for an example for $n=4$).

\begin{figure}[h]
\epsfysize=4cm
\centerline{\epsfbox{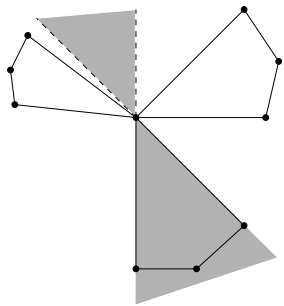}}
\caption{The best realization of a graph consisting of three cycles of order $4$, all attached in a common vertex} \label{3_c4_point}
\end{figure}

\end{prop}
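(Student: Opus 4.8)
The plan is to prove the two matching bounds separately, following the template of Proposition \ref{closed_chain}: a counting argument based on Observation \ref{obs} for the lower bound, and the realization described in the statement for the upper bound.

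For the lower bound I would reason exactly as in Proposition \ref{closed_chain}. The graph $G$ has $3(n-1)+1=3n-2$ vertices, and since each of the three $n$-cycles $C$ passes through the common vertex, the points \emph{not} lying on $C$ are precisely the $2(n-1)$ non-common vertices of the other two cycles. By Observation \ref{obs}, each such separated point forces at least $n-2$ Orchard crossings on the edges of $C$. The three cycles share only the common vertex and hence have pairwise disjoint edge sets, so crossings accumulated on different cycles are distinct; moreover, the crossings contributed to a fixed $C$ by distinct separated points come from distinct generating pairs (an external point together with a vertex of $C$) and so are distinct as well. Summing over the three cycles yields at least $3\cdot 2(n-1)(n-2)=6(n-1)(n-2)$ distinct crossings, whence ${\rm OCN}(G)\geq 6(n-1)(n-2)$.

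For the upper bound I would analyze the stated realization: the common vertex $O$ at the center, the remaining $3(n-1)$ points placed on the surrounding circle in three tight, well-separated clusters, one per cycle, so that the non-common vertices of each cycle are consecutive on the hull and the three clusters sit roughly $120^\circ$ apart. Every edge is then either a \emph{spoke} $Op$ (joining $O$ to an extreme point of a cluster) or an \emph{arc edge} joining two consecutive hull points of one cluster. I would first check that the arc edges carry no crossings: they are hull edges, so no chord spanned by two circle points can separate their adjacent endpoints, and a line through $O$ and a circle point $w$ meets such an edge only if the antipode of $w$ lands in the tiny gap between the endpoints, which the clustering prevents since all antipodes fall in the wide gaps between clusters. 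I would then count crossings on a single spoke $Op$: a line through two circle points crosses the radial segment $Op$ precisely when $p$ lies on the minor arc it subtends, and for this clustered layout a short estimate shows this happens for exactly $(n-1)(n-2)$ pairs, namely the $n-2$ remaining points of $p$'s own cluster paired with the $n-1$ points of the single neighboring cluster lying within a half-turn on the opposite side; lines through $O$ contribute nothing, as they meet the spoke only at its endpoint $O$. Over the six spokes this gives $6(n-1)(n-2)$, so ${\rm OCN}(G)\leq 6(n-1)(n-2)$, and the two bounds coincide.

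The routine half is the lower bound, which mirrors Proposition \ref{closed_chain} almost verbatim. The main obstacle is the bookkeeping in the upper bound: one must verify that the clustering genuinely suppresses every spurious crossing, both on the arc edges and the unwanted spoke crossings arising from pairs of points in two \emph{different} clusters, so that each spoke attains exactly the minimal value $(n-1)(n-2)$ and no more. This is precisely the role of the \emph{moreover} clause, which demands that only one cycle occupy the sector cut out by a cycle's two spokes; I would make the minor-arc estimate quantitative enough to confirm that such cross-cluster pairs always subtend an arc exceeding $180^\circ$ and therefore never separate the center from a spoke endpoint.
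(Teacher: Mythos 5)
Your proposal is correct and takes essentially the same approach as the paper, which gives no explicit proof here but states that the argument is ``similar to the proof of Proposition \ref{closed_chain}'': namely, a lower bound of $3\cdot 2(n-1)(n-2)$ obtained by applying Observation \ref{obs} to the $2(n-1)$ points separated from each of the three edge-disjoint cycles, matched by an upper bound from counting crossings in the stated realization. Your spoke-by-spoke count of exactly $(n-1)(n-2)$ crossings per spoke (and zero on the hull edges, using the ``moreover'' clause and the antipode argument) is a correct, more detailed verification of what the paper leaves implicit.
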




\section{The Orchard crossing number of the $n$-prism}\label{prism}

The {\it $n$-prism graph}, denoted by $P_n$, is a graph consisting of $2n$ vertices and $3n$ edges organized as two cycles of order $n$, with the corresponding vertices of the cycles connected by edges (see Figure \ref{5_prism_graph} for an example for $n=5$).

\begin{figure}[h]
\epsfysize=4cm
\centerline{\epsfbox{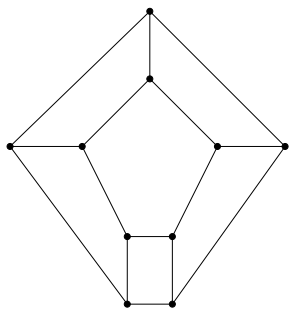}}
\caption{The 5-prism graph} \label{5_prism_graph}
\end{figure}

In this section, we deal with the Orchard crossing number of the $n$-prism.
We start by constructing drawings for the upper bound. Then, we supply
two lower bounds.

\begin{prop}
$${\rm OCN}(P_3) \leq 10\  ; \ {\rm OCN}(P_4) \leq 32. $$
For even $n>4$, we have:
$${\rm OCN}(P_n) \leq 4n(n-2).$$
For odd $n>4$, we have:
$${\rm OCN}(P_n) \leq 4n(n-2)+2.$$
\end{prop}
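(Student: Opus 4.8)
The plan is to prove every inequality by exhibiting one explicit rectilinear drawing and counting its Orchard crossings directly, exactly in the spirit of the proofs of the earlier propositions. For the counting I will use the bookkeeping tool already implicit in those proofs and justified by Observation \ref{obs}: if all points are placed in convex position (on a circle, then perturbed generically into general position), and the chord joining two of them has $p$ points on one of its sides and $q$ on the other, then this edge carries \emph{exactly} $pq$ Orchard crossings, since a line through two of the remaining points meets the segment precisely when its two endpoints lie on opposite arcs. In particular, an edge joining two cyclically adjacent hull points carries $0$ crossings, and an edge spanning exactly $k$ points on its short side carries $k(2n-2-k)$. All drawings below place the $2n$ prism vertices in convex position, so the whole proof reduces to choosing the cyclic order and summing $pq$ over the $3n$ edges.

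For even $n>4$ I place the $2n$ vertices in the cyclic order read off from a zig-zag Hamiltonian cycle of $P_n$, namely in consecutive blocks $a_{2j-1},\,b_{2j-1},\,b_{2j},\,a_{2j}$ for $j=1,\dots,n/2$. This choice makes every rung $a_ib_i$ join two adjacent hull points, so all $n$ rungs contribute $0$. The cycle edges then split into two types. The \emph{within-block} edges $a_{2j-1}a_{2j}$ and the \emph{between-block} edges $b_{2j}b_{2j+1}$ (indices taken cyclically) each span exactly the two opposite-cycle points lying between their endpoints, hence contribute $2(2n-4)=4n-8$ apiece; the remaining $n$ cycle edges join adjacent hull points and contribute $0$. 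There are $n/2+n/2=n$ edges of the first kind, giving the total $n(4n-8)=4n(n-2)$, which is the claimed bound.

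For odd $n>4$ the zig-zag cannot close into a whole number of blocks, so I keep the same ordering for indices $1,\dots,n-1$ and append a final half-block $a_n,\,b_n$. The parity defect forces the two closing cycle edges $a_na_1$ and $b_nb_1$ to span a single point each instead of being absorbed into the clean pattern, so each contributes $1\cdot(2n-3)=2n-3$; recomputing the two edge-types then yields the grand total $4(n-1)(n-2)+2(2n-3)=4n(n-2)+2$. The two exceptional small values are handled by a single drawing each: the block drawing above already realizes $32=4\cdot4\cdot2$ crossings for $P_4$, while for $P_3$ a direct (essentially ad hoc) small drawing attains $10$, which is strictly better than the $14$ produced by the odd-case template and so must be exhibited separately.

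The main obstacle is choosing the cyclic order correctly. Since $P_n$ is not outerplanar, no convex drawing can make all $3n$ edges short at once; the purpose of the zig-zag ordering is to spend all the available savings on the $n$ rungs, making every one of them adjacent on the hull, while forcing the unavoidable length onto exactly $n$ cycle edges, each of which spans only two points. Verifying that precisely these $n$ edges are the long ones, and in the odd case bookkeeping the parity defect tightly enough to show the surplus is exactly $2$ rather than growing with $n$, is the delicate part of the argument; everything else is the routine $pq$ arithmetic together with a generic perturbation to guarantee general position.
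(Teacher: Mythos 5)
Your construction for $n \ge 4$ is essentially identical to the paper's: your blocks $a_{2j-1}, b_{2j-1}, b_{2j}, a_{2j}$ produce the same cyclic pattern around the $2n$-gon (two vertices of one cycle, then two of the other, alternating) as the paper's white/black coloring, and your counts agree with the paper's --- $n$ diagonals each spanning two points for even $n$, giving $n\cdot 2(2n-4)=4n(n-2)$, and $n-1$ such diagonals plus two diagonals spanning one point for odd $n$, giving $4(n-1)(n-2)+2(2n-3)=4n(n-2)+2$. Your exact convex-position counting rule (an edge with $p$ and $q$ points on its two sides carries exactly $pq$ Orchard crossings, since for points in convex position a line through two other points meets a segment precisely when the corresponding chords interleave) is correct, and is a cleaner formalization of the counting the paper performs directly. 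Your observation that the even template already realizes $32$ for $P_4$ is also correct and matches the paper, which instead handles $P_4$ by an explicit figure.

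The one genuine gap is $P_3$. The inequality ${\rm OCN}(P_3)\le 10$ is part of the statement, and your proposal only asserts that ``a direct (essentially ad hoc) small drawing attains $10$'' without exhibiting one; since the drawing is the entire content of that sub-claim, as written it is unproven (the paper does exhibit explicit drawings for $P_3$ and $P_4$). The fix is easy and stays entirely inside your own framework: order the six vertices along the Hamiltonian cycle $a_1, a_2, a_3, b_3, b_2, b_1$ of $P_3$ on a convex hexagon. Then all six hull edges are edges of the graph and cost $0$, and the three remaining edges are $a_1a_3$ and $b_1b_3$ (each spanning one point, cost $1\cdot 3=3$ apiece) and $a_2b_2$ (a main diagonal, cost $2\cdot 2=4$), for a total of $3+3+4=10$. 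So no non-convex or genuinely ad hoc drawing is needed --- only a cyclic order different from your zig-zag template, namely one read off a Hamiltonian cycle of $P_3$ rather than one that keeps all rungs on the hull.
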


\begin{proof}
For the $3$-prism $P_3$ and the $4$-prism $P_4$, the upper bounds are attained by the drawings presented in Figure \ref{realize_3_4_prism}.

\begin{figure}[h]
\epsfysize=4cm
\centerline{\epsfbox{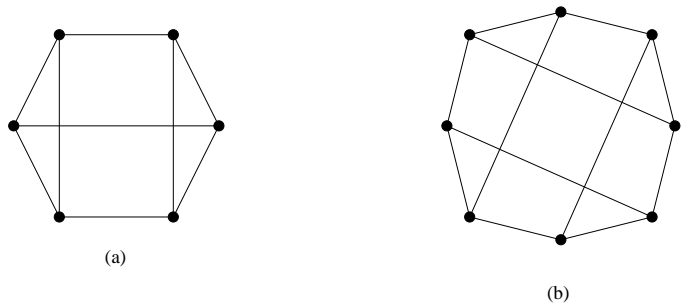}}
\caption{The realizations for the $3$-prism and the $4$-prism which yield the upper bounds} \label{realize_3_4_prism}
\end{figure}

For $n>4$, the upper bound for the Orchard crossing number of the
$n$-prism graph is attained by the following construction.
Draw a regular $2n$-gon. Starting from any point, color the
vertices white and black (corresponding to the two cycles
of order $n$) as follows: two whites, two blacks, two whites,
two blacks, etc. For the case of odd $n$,
the last two vertices will be one white and one black.

Now connect the white (resp. black) vertices around the $2n$-gon so
that each white (resp. black) vertex is connected to its closest clockwise white (resp. black) neighbor. For even $n$, we complete the graph by filling in the remaining edges around the $2n$-gon. For odd $n$, in completing the graph we do not draw the edge between the single white vertex and the single black vertex (see Figure \ref{realize_prism} for both cases).

\begin{figure}[h]
\epsfysize=4cm
\centerline{\epsfbox{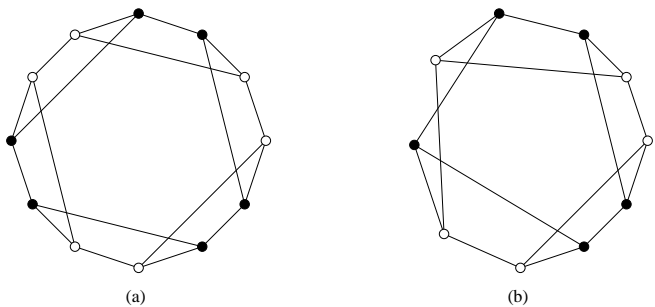}}
\caption{The realizations which yield the upper bounds for the $n$-prism graph; part (a) for even $n$ and part (b) for odd $n$} \label{realize_prism}
\end{figure}

In these drawings, all edges on the $2n$-gon have no Orchard crossings at all
(since there are no points inside the convex hull).  In the case of even $n$,
we have $n$ additional edges, each contributing $2(2n-4)$ Orchard crossings,
yielding the claimed upper bound of $4n(n-2)$. For the case of odd $n$, we have
$n-1$ additional edges, each contributing $2(2n-4)$ Orchard crossings and two edges, each contributing $2n-3$ Orchard crossings. This yields $2(n-1)(2n-4) + 2(2n-3)$ Orchard crossings, which is equal to the claimed upper bound.
\end{proof}

Now, we move to the lower bounds:

\begin{prop}
$$\max\{3n(n-2),4n(n-3)\} \leq {\rm OCN}(P_n).$$
\end{prop}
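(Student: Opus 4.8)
The plan is to combine two ingredients used throughout the paper: Observation~\ref{obs}, and the reformulation of the Orchard crossing number in which, for a fixed edge $(s,t)$, the quantity $c(s,t)$ equals the number of lines (generated by pairs of the $2n$ points, other than the pair $\{s,t\}$) that separate $s$ from $t$. Summing over edges, a lower bound on ${\rm OCN}(P_n)$ is obtained by exhibiting, in an arbitrary rectilinear drawing, a collection of guaranteed separating lines and checking that the crossings they produce are pairwise distinct. As a baseline, $P_n$ contains the disjoint union $C_1\sqcup C_2$ of its two $n$-cycles on the \emph{same} vertex set, so by the disjoint-union computation of Section~\ref{disjoint_sec} (with $x=2$) every drawing already has at least $2n(n-2)$ crossings lying on the cycle edges. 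Both asserted bounds exceed this baseline, so the task is to extract the surplus.

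For the bound $3n(n-2)\le{\rm OCN}(P_n)$ I would account for the $n$ rung edges, whose crossings are automatically distinct from the cycle-edge crossings since the rungs are edge-disjoint from $C_1$ and $C_2$. The goal is to show that the rungs together carry at least $n(n-2)$ crossings. Fixing a rung $(a_i,b_i)$, I would estimate $c(a_i,b_i)$ by counting lines that separate $a_i$ from $b_i$: using Observation~\ref{obs} applied to the cycle $C_1$ (to which $b_i$ is foreign) and to $C_2$ (to which $a_i$ is foreign), together with a wedge count at $a_i$ and at $b_i$, one aims to guarantee an average of $n-2$ separating lines per rung. Summing over the $n$ rungs and adding the baseline then yields $3n(n-2)$.

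For the bound $4n(n-3)\le{\rm OCN}(P_n)$ I would pass to a per-point form of the count, writing the total as $\tfrac12\sum_p f(p)$, where $f(p)$ is the number of incidences of separated edges over all lines through $p$. Each vertex $p$ is foreign to the opposite cycle and hence contributes at least $n-2$ crossings there; the refinement needed for the factor $4$ and the term $n-3$ comes from also charging $p$ for the rungs and for the near cycle that its lines must cross. An alternative route is a cut argument: since $P_n$ is $3$-edge-connected and, more strongly, every partition of its vertices into two contiguous arcs cuts at least four edges, a separating line whose two sides are both nonempty and large must cross at least four edges; summing this estimate over the family of such balanced lines, and treating the hull-adjacent lines (those with a small side) separately, should produce $4n(n-3)$.

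The main obstacle, in both parts, is to prove that the surplus crossings are present in \emph{every} drawing, independent of the point configuration, and that no crossing is counted twice. This forces a case analysis according to whether the foreign points lie inside or outside each cycle and whether the cycles are in convex position, since Observation~\ref{obs} only furnishes the worst-case minima $n$ and $n-2$; the delicate point is the per-rung guarantee for the first bound and the airtight global accounting for the second, where the estimate must degrade gracefully for the unbalanced lines near the convex hull. Finally, the small cases and the parity of $n$ would be handled separately, as in the upper-bound construction.
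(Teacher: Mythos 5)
Your strategy for the bound $3n(n-2)$ has a fatal flaw: you take the $2n(n-2)$ baseline from the two disjoint $n$-cycles and then try to locate the missing $n(n-2)$ crossings on the $n$ rung edges. But no drawing-independent guarantee of this kind can exist, because in the extremal convex drawing of Section \ref{prism} (the one achieving the upper bound $4n(n-2)$) all $n$ rungs lie on the convex hull of the $2n$-gon and therefore carry \emph{zero} Orchard crossings; neither a per-rung nor an averaged ``$n-2$ separating lines per rung'' estimate can survive this example. The paper avoids localizing the surplus on any particular edges: it counts over the collection consisting of the two $n$-cycles \emph{and} all $n$ of the $4$-cycles. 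Observation \ref{obs} forces at least $2n(n-2)$ crossings attributed to the $n$-cycles and $2n(2n-4)$ attributed to the $4$-cycles, for a total of $6n(n-2)$; since every edge of $P_n$ lies in exactly two cycles of this collection (each rung in two $4$-cycles, each cycle edge in one $n$-cycle and one $4$-cycle), each actual crossing is counted at most twice, and dividing by $2$ gives $3n(n-2)$. The double cover is exactly what lets the surplus live on the cycle edges rather than on the rungs.

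For the bound $4n(n-3)$, your two sketches are both speculative (``should produce''), and the cut argument as stated fails: the edges incident to the two points $p,q$ defining a line cannot be counted (they meet the line at its defining points), so the relevant cut lives in $G-\{p,q\}$, which is no longer close to $4$-edge-connected. For instance, if $p,q$ are the endpoints $a_i,b_i$ of a rung, then $G-\{a_i,b_i\}$ is a ladder, and a line through $a_i,b_i$ splitting that ladder into two balanced halves crosses only the two rails, i.e.\ $2$ edges, not $4$; if $p,q$ are the two cycle-neighbors of a vertex $v$, a line through them isolating $v$ guarantees only the single crossing on the rung at $v$. The paper's argument is far more economical: the $n$ cycles of order $4$ force $2n(2n-4)=4n(n-2)$ crossings by Observation \ref{obs}; overcounting can occur only on the $n$ shared rungs, and for each shared rung the only pairs counted by both adjacent $4$-cycles are the $2\times 2=4$ pairs with one point in each cycle and neither on the rung, so subtracting $4n$ yields $4n(n-3)$. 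If you want to pursue a line-by-line (cut) argument, you would first need a lemma controlling cut sizes in $G$ minus two vertices, together with an accounting of the unbalanced lines, and it is not clear the totals would beat the paper's bound.
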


\begin{proof}
We show that the Orchard crossing number is larger than (or equal to)
both numbers appearing in the formulation of the result.

We start with the bound $3n(n-2)$. For each of the $n$ cycles of order $4$,
each of the $2n-4$ points separate from it contributes two Orchard crossings
(by Observation \ref{obs}) on the edges of these cycles.
For each of the two cycles of order $n$, each of the $n$ points separate from it
contributes $n-2$ Orchard crossings on the edges of these cycles (again, by Observation \ref{obs}). This gives a total of
$$2n(2n-4) + 2n(n-2) = 6n(n-2)$$
Orchard crossings.

However, in these cycles, each edge of the graph is counted exactly twice. The reason for this is
as follows: the edges shared by two cycles  of order $4$ are counted twice, once for each of these cycles.
These edges are not in the cycles of order $n$. On the other hand, the edges
in the cycles of order $n$ are the edges not shared by two cycles of order $4$,
but are only on one cycle of order $4$. Thus, we have to divide $6n(n-2)$ by $2$ and we get the required lower bound.

\medskip

Now, we pass to the second bound $4n(n-3)$. The $n$-prism $P_n$ contains $n$ cycles of order $4$. As each cycle has $2n-4$ vertices separated from the cycle, we know by Observation \ref{obs} that each of these cycles determines $2(2n-4)$ Orchard crossings on the edges of that cycle. However, there are $n$ edges which are counted in two cycles. Therefore, we must subtract the number of Orchard crossings on these edges which will be overcounted.

To do this, we consider two cycles of order $4$ which share a common edge. For each of these cycles, we count crossings determined by pairs of points, one of which is in the cycle and the other of which is not in the cycle. The only pairs which will be counted in both cycles and will therefore be overcounted in our count of  Orchard crossings are those with the following property: both points are not on the shared edge, and one point is in each of the two cycles of order $4$ (see Figure \ref{2_attached_c4}). Since each cycle has two points not on the shared edge, we have $4$ such pairs and therefore $4$ possible overcounted Orchard crossings for each of the $n$ shared edges in the graph. Thus, we have a lower bound of $2n(2n-4)-4n= 4n(n-3)$ as claimed.

\begin{figure}[h]
\epsfysize=4cm
\centerline{\epsfbox{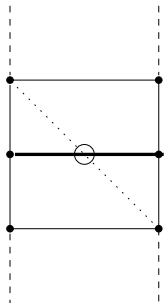}}
\caption{In the presented segment of $P_n$, the Orchard crossing (which is encircled) on the bold edge generated by the dotted line is counted twice, once for each of the upper and lower cycles of order $4$.} \label{2_attached_c4}
\end{figure}

\end{proof}

\begin{rem}\ \\
\begin{enumerate}
\item Note that for $n \leq 6$, $3n(n-2) \geq 4n(n-3)$ and for $n>6$, $3n(n-2) < 4n(n-3)$.
\item Moreover, observe that in the realizations which yield the upper bound (see Figure \ref{realize_prism}), there are no overcounts at all (since the shared edges between two adjacent cycles of order $4$ are placed on the convex hull, and hence have no crossings on them).
\item It is worth mentioning that {\em asymptotically}, the lower bound and the upper bound are the same.
\end{enumerate}
\end{rem}

\section{The Orchard crossing number of the $n$-ladder}\label{ladder}

The {\it $n$-ladder graph}, denoted by $L_n$, is a graph consisting of $2n$ vertices and $3n-2$ edges organized as a ladder, i.e. we have a sequence of $n-1$ cycles of order $4$ where any two consecutive cycles share an edge (see Figure \ref{5_ladder_graph} for an example for $n=5$). Note that the $n$-ladder graph has two edges less than the $n$-prism graph.

\begin{figure}[h]
\epsfysize=4cm
\centerline{\epsfbox{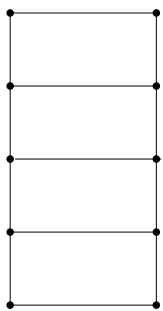}}
\caption{The $5$-ladder graph} \label{5_ladder_graph}
\end{figure}

In this section, we deal with the Orchard crossing number of the $n$-ladder.

\subsection{The $3$-ladder and $4$-ladder}

We start with the exact values of the Orchard crossing numbers of the $3$-ladder and the $4$-ladder.

\begin{lem}
${\rm OCN}(L_3)=4$.
\end{lem}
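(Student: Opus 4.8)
The plan is to establish the two inequalities ${\rm OCN}(L_3)\le 4$ and ${\rm OCN}(L_3)\ge 4$ separately. Recall that $L_3$ consists of two $4$-cycles sharing a single edge; I would label its vertices $a_1,a_2,a_3$ (top) and $b_1,b_2,b_3$ (bottom), so that the seven edges are the four rail edges $a_1a_2,a_2a_3,b_1b_2,b_2b_3$ and the three rung edges $a_1b_1,a_2b_2,a_3b_3$, with the two $4$-cycles being $a_1a_2b_2b_1$ and $a_2a_3b_3b_2$, sharing the rung $a_2b_2$.

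For the upper bound, I would place the six vertices in convex position in the cyclic order $a_1,a_2,a_3,b_3,b_2,b_1$ around the convex hull. Then six of the seven edges, namely $a_1a_2,\,a_2a_3,\,a_3b_3,\,b_3b_2,\,b_2b_1,\,b_1a_1$, are hull edges and hence carry no Orchard crossings (there are no points inside the hull, and no pair of the remaining points separates the endpoints of a hull edge). The single remaining edge $a_2b_2$ becomes a chord having exactly two of the other four points on each side. By the product count used throughout the paper (points on one side times points on the other side), this chord carries $2\cdot 2 = 4$ Orchard crossings, so this drawing realizes exactly $4$ crossings and thus ${\rm OCN}(L_3)\le 4$.

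For the lower bound, I would argue exactly as in the lower-bound argument for the $n$-prism. Applying Observation \ref{obs} to each of the two $4$-cycles: each cycle has $6-4=2$ vertices separated from it, and each separated vertex contributes at least $4-2=2$ Orchard crossings on the edges of that cycle, for at least $2\cdot 2 = 4$ crossings per cycle and hence at least $8$ in total. These two counts can overlap only on the single shared edge $a_2b_2$, and a crossing there is double-counted only when it is generated by a line through one of the four pairs $\{a_1,a_3\},\{a_1,b_3\},\{b_1,a_3\},\{b_1,b_3\}$ (one separated vertex of each cycle, neither lying on the shared edge). Hence the overcount is at most $4$, leaving at least $8-4=4$ distinct Orchard crossings, so ${\rm OCN}(L_3)\ge 4$.

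The main obstacle is the bookkeeping in the lower bound: one must confirm that the ``at least two crossings per separated point'' estimate holds for every drawing, including the case in which a separated vertex falls inside a cycle (where Observation \ref{obs} in fact gives the larger value $4$), and one must verify that the shared edge $a_2b_2$ is genuinely the only place where the two cycle-counts can collide, so that subtracting the maximal overcount of $4$ is legitimate. Combining the two bounds yields ${\rm OCN}(L_3)=4$.
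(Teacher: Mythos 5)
Your proof is correct, but your lower bound takes a genuinely different (and heavier) route than the paper's. The paper's lower-bound argument uses only \emph{one} of the two $4$-cycles: that cycle has two vertices separated from it, each of which forces at least $2$ Orchard crossings on the cycle's edges by Observation \ref{obs}, giving the bound $4$ immediately with no double-counting to worry about. You instead run the inclusion--exclusion scheme that the paper reserves for the general prism and ladder bounds: count at least $4$ crossings for \emph{each} of the two $4$-cycles (total $8$), then carefully bound the possible overlap on the unique shared edge $a_2b_2$ by the $4$ mixed pairs $\{a_1,a_3\},\{a_1,b_3\},\{b_1,a_3\},\{b_1,b_3\}$, arriving at $8-4=4$. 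Your overcount analysis is rigorous and correct (each qualifying line meets the segment $a_2b_2$ at most once, and only lines through one separated vertex of each cycle can be counted twice), and it has the virtue of being exactly the specialization of the general technique used later for $P_n$ and $L_n$; but for $L_3$ it buys nothing over the single-cycle shortcut, which gets the same constant $4$ with a strictly simpler argument. Your upper bound (convex position, six hull edges crossing-free, the chord $a_2b_2$ carrying $2\cdot 2=4$ crossings) is the same drawing the paper exhibits in its figure, just described and counted explicitly.
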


\begin{proof}
Consider one cycle of order $4$. It has two points outside of it. By Observation \ref{obs}, each of these two points generates at least two Orchard crossings on the edges of the cycle. Therefore, we have at least $4$ Orchard crossings. This bound is realized by the relevant drawing in Figure \ref{realize_L3_L4_L5}.

\begin{figure}[h]
\epsfysize=4cm
\centerline{\epsfbox{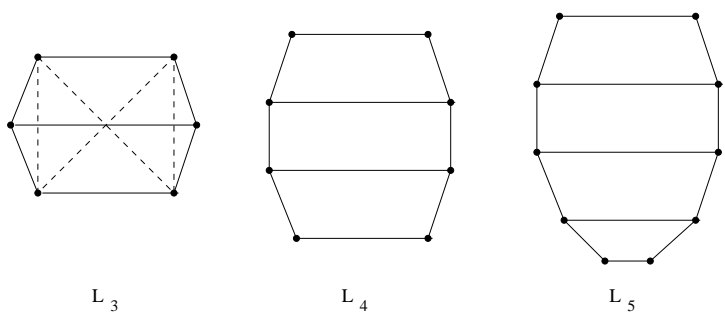}}
\caption{The best realizations of $L_3$, $L_4$ and $L_5$} \label{realize_L3_L4_L5}
\end{figure}

\end{proof}

\begin{lem}
${\rm OCN}(L_4)=16$.
\end{lem}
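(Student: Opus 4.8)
The goal is to show ${\rm OCN}(L_4)=16$. The plan is to establish matching upper and lower bounds. The upper bound follows immediately from the drawing of $L_4$ already promised in Figure \ref{realize_L3_L4_L5}, so the real work is the lower bound, namely ${\rm OCN}(L_4) \ge 16$. The natural first step is to reuse the double-counting philosophy from the prism lower bound (Proposition for $4n(n-3)$), but adapted to the ladder's open structure.

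First I would count, via Observation \ref{obs}, the crossings forced on the edges of the three cycles of order $4$ that make up $L_4$. Each such cycle has $2n-4 = 4$ vertices separated from it (since $L_4$ has $2n=8$ vertices total), so each $4$-cycle is forced to carry at least $2 \cdot 4 = 8$ Orchard crossings on its own edges. With three cycles this gives $24$ forced crossings in the raw count. Then I would subtract the overcounts arising from the two internal edges each shared by a pair of consecutive $4$-cycles. Exactly as in the prism argument, for two $4$-cycles sharing an edge the only pairs counted in both cycles are those where both points lie off the shared edge with one point in each cycle; this yields at most $4$ overcounted crossings per shared edge. With two shared edges in $L_4$, the total overcount is at most $8$, leaving a lower bound of $24 - 8 = 16$, matching the upper bound.

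The step I expect to be the main obstacle is rigorously justifying that the overcount is genuinely bounded by $4$ per shared edge \emph{and} that no further overcounting occurs across non-adjacent cycles. In the prism each edge lies on exactly two $4$-cycles, but in the ladder the two ``rail'' edges at the ends and the outer rungs lie on only one cycle, while the middle rung structure must be examined carefully to confirm that a given forced crossing is not triple-counted among the three cycles. I would therefore verify that the three $4$-cycles of $L_4$ pairwise share at most one edge and that no single edge belongs to all three, so that the inclusion-exclusion truncates cleanly at the pairwise level and the subtraction of $8$ is exactly the right correction rather than an over-subtraction. Once this bookkeeping is confirmed, the bound $16$ is forced and equality with the drawing in Figure \ref{realize_L3_L4_L5} completes the proof.

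A cleaner alternative, which I would keep in reserve in case the overcounting analysis becomes delicate, is to bound ${\rm OCN}(L_4)$ from below by separating the contribution of the two outermost $4$-cycles (which share no edge with each other) and handling the middle region directly. Since $L_4$ is small, one can in principle enumerate the qualitative position types of the $8$ points relative to each cycle and check that any drawing achieving fewer than $16$ crossings would force three collinear points or violate Observation \ref{obs}; but I expect the double-counting argument above to be both shorter and more in keeping with the prism proof already in the paper.
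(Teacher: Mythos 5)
Your proposal is correct, but it takes a genuinely different (and heavier) route than the paper. You run the prism-style inclusion--exclusion over all three $4$-cycles of $L_4$: a raw count of $3\cdot 8=24$ forced crossings, minus at most $4$ overcounted crossings on each of the two shared rungs, giving $24-8=16$; your bookkeeping checks (the three cycles pairwise share at most one edge, no edge lies on all three, and a crossing on a shared edge can only be double-counted by the $2\times 2=4$ pairs consisting of one off-rung vertex from each adjacent cycle) are exactly what is needed, and the argument goes through. The paper instead observes that the two \emph{outermost} $4$-cycles of $L_4$ are vertex-disjoint, hence edge-disjoint: each has $4$ points separated from it, each such point forces at least $2$ crossings on that cycle's edges by Observation \ref{obs}, and since the two cycles share no edges these $8+8=16$ crossings are automatically distinct --- no overcount analysis whatsoever. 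Ironically, the ``reserve'' alternative you sketch (separating the contributions of the two outermost cycles) \emph{is} the paper's proof, and it is not delicate at all: the middle region needs no handling, since the two outer cycles alone already force $16$. What your approach buys is consistency with the general lower-bound machinery for $L_n$ and $P_n$ elsewhere in the paper; indeed, your computation is the $n=4$ case of that argument carried out with the correct count of $n-1$ cycles, and run in general it yields $4(n-1)(n-2)-4(n-2)=4(n-2)^2$, which is actually stronger than the bound $4(n-2)(n-3)$ stated in the paper. What the paper's approach buys is brevity: edge-disjointness of the chosen cycles makes distinctness of the crossings immediate.
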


\begin{proof}
Consider two disjoint cycles of order $4$. Each has $4$ points outside of it. By Observation \ref{obs}, each of these $4$ points generates at least two Orchard crossings on the edges of the cycle. Therefore, we have at least $16$ Orchard crossings. This bound is realized by the relevant drawing in Figure \ref{realize_L3_L4_L5}.
\end{proof}

\subsection{Upper and lower bounds for the $n$-ladder, $n\geq 5$}

We start with the upper bound.

\begin{prop}
$${\rm OCN}(L_5) \leq 40.$$
For $n>5$, we have:
$${\rm OCN}(L_n) \leq 4(n-1)(n-2).$$
\end{prop}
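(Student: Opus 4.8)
The plan is to exhibit explicit drawings, exploiting the fact that $L_n$ is obtained from the $n$-prism $P_n$ by deleting the two ``closing'' edges of its two $n$-cycles: if the rails are the paths $a_1a_2\cdots a_n$ and $b_1b_2\cdots b_n$ and the rungs are the edges $a_ib_i$, then adjoining the edges $a_na_1$ and $b_nb_1$ turns the two rails into the two $n$-cycles of $P_n$. Conversely, deleting one suitable edge from each $n$-cycle of $P_n$ produces a graph isomorphic to $L_n$, so the already-constructed prism drawings can be recycled.

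First I would recall the optimal prism drawings from Section~\ref{prism} and make the key observation that deleting edges from a drawing moves none of the $2n$ points, and that $c(s,t)$ counts the lines generated by \emph{all} pairs of points (not only edges). Hence deleting an edge decreases the total count by exactly the number of Orchard crossings carried by that edge, while leaving every other edge's contribution untouched. The whole problem thus reduces to identifying, inside the prism drawing, two deletable edges (one from each $n$-cycle) whose removal yields a valid ladder and whose combined crossing contribution is as large as possible.

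For even $n$ I would use the $2n$-gon drawing with the ``two white, two black'' colouring, where each $n$-cycle consists of chords skipping two vertices (each carrying $2(2n-4)$ crossings) together with hull edges (carrying none). I would verify on the labelled $2n$-gon that one may cut the cyclic chain of squares between two consecutive rungs by deleting one white chord and one black hull edge, leaving precisely the linear chain of $n-1$ squares defining $L_n$. The total then drops from $4n(n-2)$ by $2(2n-4)=4(n-2)$, giving $4n(n-2)-4(n-2)=4(n-1)(n-2)$. For odd $n$ I would run the same argument on the odd-$n$ prism drawing, this time deleting the two ``short'' special edges (one per cycle, each carrying $2n-3$ crossings): the total drops from $4n(n-2)+2$ by $2(2n-3)$, which again equals $4(n-1)(n-2)$.

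The case $L_5$ is genuinely special: the generic construction above yields only $48$, whereas the claimed bound is $40$, so I would handle it by hand, exhibiting the drawing of Figure~\ref{realize_L3_L4_L5} and checking by direct inspection that it carries exactly $40$ Orchard crossings. The main obstacle I anticipate is the bookkeeping in the middle step: one must confirm that the two deleted edges are exactly the closing rail edges (so the result is $L_n$, not some other one-square-shorter graph), that in the odd case the two special edges really lie in different cycles, and that their crossing contributions are exactly $2(2n-4)$ and $0$ (even case) or $2n-3$ apiece (odd case). This is a finite combinatorial check on the labelled $2n$-gon, to be carried out once per parity; everything else is routine arithmetic.
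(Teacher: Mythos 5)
Your proposal is correct and follows essentially the same route as the paper: the paper's drawing for $L_n$, $n>5$, is precisely the prism drawing with the two appropriate edges omitted (for even $n$, the white chord and the black hull edge lying in a common $4$-cycle; for odd $n$, the two short chords carrying $2n-3$ crossings each, which are indeed corresponding edges), and its count of $(n-1)\cdot 2(2n-4)=4(n-1)(n-2)$ agrees with your subtraction argument. The paper also treats $L_5$ separately by the ad hoc drawing of Figure \ref{realize_L3_L4_L5}, exactly as you propose.
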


\begin{proof}
The drawing for the upper bound for the Orchard crossing number of the $5$-ladder $L_5$ is attained by the relevant drawing in Figure \ref{realize_L3_L4_L5}.

The drawing for the upper bound for the Orchard crossing number of the $n$-ladder for $n>5$ is very similar to that of the $n$-prism (see Section \ref{prism}).
The difference between these graphs is that the $n$-ladder does not contain any cycles of order $n$. Thus, for the drawing of the $n$-ladder, we follow the drawing of the $n$-prism, but we do not connect the final edge of the white (resp. black) cycle. For the case of even $n$, the edges which are omitted must be in the same cycle of order $4$ in $P_n$. For the case of odd $n$, the edges which are omitted are the edges connecting two white (resp. black) points which have a single black (resp. white) point in between them (see Figure \ref{realize_ladder} for both cases).

\begin{figure}[h]
\epsfysize=4cm
\centerline{\epsfbox{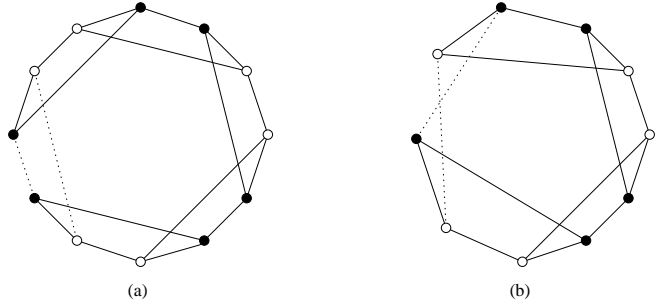}}
\caption{The realizations which yield the upper bound for the $n$-ladder; part (a) for even $n$ and part (b) for odd $n$. The dotted edges are the two omitted edges from the realizations of the upper bound for the $n$-prism (see Figure \ref{realize_prism}).} \label{realize_ladder}
\end{figure}

To count the Orchard crossings in this drawing, we again notice that all edges around the $2n$-gon have no Orchard crossings. For both even and odd $n$, we have $n-1$ edges which have $2(2n-4)$ Orchard crossings each. This yields an upper bound of $4(n-1)(n-2)$ Orchard crossings as claimed.
\end{proof}

\begin{rem}
The drawing of $L_6$ can also be drawn similar to the way $L_3,L_4$ and $L_5$ were drawn (both drawings of $L_6$ have the same number of Orchard crossings).
\end{rem}

Now, we move to the lower bounds:
\begin{prop}
For odd $n>5$, we have:
$$\max\{3n^2-10n +7,4(n-2)(n-3)\} \leq {\rm OCN}(L_n).$$
For even $n>5$, we have:
$$\max\{3n^2-10n +8,4(n-2)(n-3)\} \leq {\rm OCN}(L_n).$$
\end{prop}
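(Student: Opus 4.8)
The plan is to prove the two lower bounds separately and take their maximum. The parity-free bound $4(n-2)(n-3)$ will closely follow the second lower bound for the $n$-prism, while the parity-dependent bounds $3n^2-10n+7$ (odd $n$) and $3n^2-10n+8$ (even $n$) will follow the first lower bound for the prism. In all cases I work with the $n-1$ four-cycles $C_1,\dots,C_{n-1}$ of the ladder, where $C_i$ is bounded by the two rungs and the two rail edges joining columns $i$ and $i+1$, and I invoke Observation \ref{obs} repeatedly.

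For $4(n-2)(n-3)$ I argue exactly as for the prism. Each $C_i$ has $2n-4$ separated vertices, so by Observation \ref{obs} it carries at least $2(2n-4)$ Orchard crossings on its four edges; summing over $i$ gives $4(n-1)(n-2)$. The only edges lying in two of the $C_i$ are the $n-2$ interior rungs, and (as in the prism) a crossing on such a rung is double counted only when it is produced by the line through a pair of non-shared vertices, one taken from each of the two adjacent four-cycles, giving at most $2\times 2=4$ overcounts per interior rung. Subtracting these at most $4(n-2)$ double-counted crossings from $4(n-1)(n-2)$ leaves at least the claimed $4(n-2)(n-3)$; the two end four-cycles $C_1,C_{n-1}$ each have one unshared rung, which only lowers the number of overcounts.

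For the parity-dependent bound I imitate the first prism argument, in which one sums the Observation \ref{obs} estimates over the four-cycles and the two rail cycles, notes that every edge is then counted exactly twice, and divides by two. The obstruction special to the ladder is that its two rails are paths rather than cycles, so Observation \ref{obs} does not apply to them, and the boundary $2n$-cycle --- the only cycle covering all rail edges --- is Hamiltonian and hence has no separated vertices to exploit. I therefore plan to prove a path-analogue of Observation \ref{obs}: a single vertex of the opposite rail forces a prescribed number of crossings on the edges of a convex $n$-vertex path, the prescribed number depending on the parity of $n$ (closing the path into a cycle restores one edge, and whether the crossing that would land on that missing edge can be avoided is controlled by parity). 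Using the four-cycles together with these two rail-paths, every rail edge and every interior rung is covered twice while the two end rungs are covered only once; since crossings on the end rungs are nonnegative, they can be dropped, and ${\rm OCN}(L_n)\ge \tfrac12\big(4(n-1)(n-2)+R\big)$, where $R$ is the total rail contribution. A short computation shows that $R=2(n-1)(n-3)$ for odd $n$ and $R=2(n-2)^2$ for even $n$ give exactly $3n^2-10n+7$ and $3n^2-10n+8$, respectively.

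The main obstacle is the parity-sensitive rail estimate. I must determine precisely how many Orchard crossings each opposite-rail vertex is forced to create on a convex $n$-path and show that the totals for the two parities differ by exactly one, which is the source of the $+7$ versus $+8$ discrepancy. By comparison, the four-cycle bookkeeping, the overcounting on the interior rungs, and the dropping of the nonnegative end-rung terms are routine once the prism arguments are in place.
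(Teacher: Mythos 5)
Your second bound is fine: keeping all $n-1$ four-cycles and subtracting at most $4$ overcounts on each of the $n-2$ interior rungs gives $4(n-1)(n-2)-4(n-2)=4(n-2)^2\geq 4(n-2)(n-3)$, which is essentially the paper's prism-style argument (in fact slightly stronger than the stated bound). The genuine gap is in the parity-dependent bound: the ``path-analogue of Observation \ref{obs}'' that your whole plan hinges on is false. A point can contribute \emph{zero} Orchard crossings to the edges of a path: place the path's vertices in convex position and the point just beyond the missing hull edge; then every line through the point and a path vertex exits the hull through that missing edge and meets no path edge. Concretely, in the drawing of $L_n$ with all $2n$ vertices in convex position and each rail occupying a contiguous arc, every line through a top-rail vertex and a bottom-rail vertex misses all rail edges, so your rail contribution $R$ is $0$ in that drawing. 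Your method then yields at most $\tfrac12\cdot 4(n-1)(n-2)=2(n-1)(n-2)$, and since $3n^2-10n+7-2(n-1)(n-2)=(n-1)(n-3)>0$ for $n>3$, no choice of ``prescribed numbers'' for the rails can be proved; the parity information simply cannot be extracted from paths. (This is also why the analogous prism argument worked: there the two rails are genuine cycles of order $n$, so Observation \ref{obs} applies to them.)

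The paper circumvents this by never using paths: it double-covers the edge set with $n-2$ of the four-cycles together with two long \emph{cycles}, each containing one of the two end rungs --- each such cycle runs along one rail, crosses an interior rung, returns along the other rail, and closes up through the end rung. The two long cycles have total order $2n+4$; when $n$ is even they can be balanced as two cycles of order $n+2$, each forcing $n(n-2)$ crossings by Observation \ref{obs}, while for odd $n$ they must have orders $n+1$ and $n+3$, forcing $(n-1)^2$ and $(n+1)(n-3)$ respectively. The excluded four-cycle is the one sharing edges with both long cycles, so every edge lies in exactly two cycles of the collection, and halving the summed forced crossings gives exactly $3n^2-10n+8$ (even $n$) and $3n^2-10n+7$ (odd $n$). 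Thus the parity difference comes from whether the two long cycles can be made equal in length, not from any crossing property of convex paths.
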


\begin{proof}
The method of attaining lower bounds is similar to the case of the $n$-prism.
We start with the first bound. We find a set of cycles which counts each edge of the graph twice. We determine the number of Orchard crossings which must exist on the edges of these cycles, and we divide by $2$ to get a lower bound for ${\rm OCN}(L_n)$.

We first consider the case of even $n$. We start with two cycles of order $n+2$ (see Figure \ref{overlap_cn}): one cycle contains the top rung of the ladder (the {\it upper} cycle) and the other cycle contains the bottom rung of the ladder (the {\it lower} cycle). We also consider $n-2$ of the $n-1$ cycles of order $4$ contained in the graph. We choose these cycles so that we exclude the only $4$-cycle which contains {\em both} an edge of the upper cycle and an edge of the lower cycle. This set of cycles counts each edge of the graph twice.

\begin{figure}[h]
\epsfysize=4cm
\centerline{\epsfbox{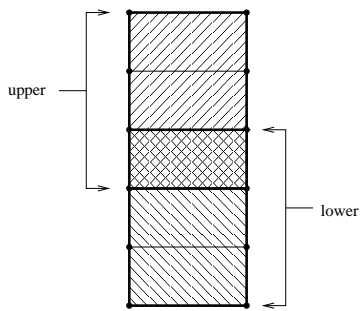}}
\caption{The upper and lower cycles of order $n+1$ (the upper one has diagonal lines with a positive slope)} \label{overlap_cn}
\end{figure}

We now count the necessary Orchard crossings on these edges. By Observation \ref{obs}, for each cycle of order $n+2$, we have at least\break $n(n-2)$ Orchard crossings and for each cycle of order $4$, we have at least $2(2n-4)$ Orchard crossings. This gives us a total of $$\frac{1}{2}[2(n-2)(2n-4)+2n(n-2)] = 3n^2-10n+8$$
Orchard crossings as claimed.

We consider the case of odd $n$: We start with one cycle of order $n+3$ and one cycle of order $n+1$. The cycle of order $n+3$ contains the top rung of the ladder and the cycle of order $n+1$ contains the bottom rung of the ladder. We also consider $n-2$ of the $n-1$ cycles of order $4$ contained in the graph. We choose these cycles so that we exclude the only $4$-cycle which contains {\em both} an edge of the upper cycle and an edge of the lower cycle. This set of cycles counts each edge of the graph twice.

We now count the necessary Orchard crossings on these edges. By Observation \ref{obs}, for the cycle of order $n+3$ we have at least\break $(n+1)(n-3)$ Orchard crossings,  for the cycle of order $n+1$ we have at least $(n-1)^2$ Orchard crossings, and for each cycle of order $4$ we have at least $2(2n-4)$ Orchard crossings. This gives us a total of
$$\frac{1}{2}[2(n-2)(2n-4)+(n-1)^2+(n+1)(n-3)] = 3n^2-10n+7$$
Orchard crossings as claimed.

\medskip

Now we move to the second bound $4(n-2)(n-3)$. The $n$-ladder $L_n$ contains $n-1$ cycles of order $4$. As each cycle has $2n-4$ points separated from the cycle, we know by Observation \ref{obs} that each of these cycles determines $2(2n-4)$ Orchard crossings on the edges of the cycle. However, there are $n-2$ edges which are counted in two adjacent cycles. We therefore must subtract the number of Orchard crossings which may be overcounted. As in the case of the $n$-prism $P_n$, we have $4$ possible overcounted Orchard crossings for each of the $n-2$ shared edges in the graph. Thus, we have a lower bound of $2(n-2)(2n-4)-4(n-2)= 4(n-2)(n-3)$ as claimed.
\end{proof}

\begin{rem} \ \\
\begin{enumerate}
\item Note that for $n \leq 8$, $3n^2-10n +8 \geq 4(n-2)(n-3)$
and for $n>8$, $3n^2-10n +8 < 4(n-2)(n-3)$.
\item As before, observe that in the realizations which yield the upper bound (see Figure \ref{realize_ladder}), there are no overcounts at all (since the shared edges between two adjacent cycles of order $4$ are placed on the convex hull, and hence have no crossings on them).
\item It is worth mentioning that as in the case of the prism graph, the lower bound and the upper bound are {\em asymptotically} the same.
\end{enumerate}

\end{rem}


\end{document}